\newcommand{\Q}{\mathbb{Q}}
\newcommand{\R}{\mathbb{R}}
\newcommand{\rank}{\operatorname{rank}}
\newtheorem{theorem} {Theorem}[section]
\newtheorem{theorem*}{Theorem}
\newtheorem{prop*} {Proposition}
\newtheorem{lemma*}{Lemma}
\newtheorem{lemma}[theorem]{Lemma}
\newtheorem{cor}[theorem]{Corollary}
\newtheorem{cor*}{Corollary}
\newtheorem{prop}[theorem] {Proposition}
\newtheorem{definition*}{Definition}
\newtheorem{rem}[theorem]{Remark}
\newtheorem{ex}[theorem]{Example}
\theoremstyle{definition}
\begin{document}

\begin{center}
{\bf   On the family of trajectories of an analytic gradient\\ flow
converging to a critical point}\\[1em]
by Zbigniew Szafraniec
\end{center}
{\bf Abstract.} Let $f:\R^n\rightarrow\R$ be an analytic function. There are presented
sufficient conditions for existence of an infinite family of trajectories 
of the gradient flow $\dot{x}=\nabla f(x)$ which converge to a critical
point of $f$.

\section{Introduction.}
Let $f:\R^n\rightarrow\R$ be an analytic function. According to {\L}ojasiewicz \cite{lojasiewicz},
the limit set of an integral curve of the dynamical system $\dot{x}=\nabla f(x)$ is either empty or contains
a single critical point of $f$. So the family of trajectories which converge to a critical point is a natural object of study
in the theory of gradient dynamical systems.

Let $f:\R^n,0\rightarrow\R,0$ be an analytic function defined in a neighborhood of the origin, having a critical point at $0$.
We shall write $T(f)$ for the set of non-trivial trajectories of the gradient flow
$\dot{x}=\nabla f(x)$ which converge to the origin. 
There is a natural problem: is $T(f)$ infinite? (In the planar case this is equivalent to the problem
whether the stable set of the origin has a non-empty interior?)

In some cases the answer is rather obvious. Let $S_r=S^{n-1}_r\cap\{f<0\}$, where $S^{n-1}_r=\{x\in\R^n\ |\ |x|=r\}$, $0<r\ll 1$.
By \cite{nowelszafraniec1}, \cite{nowelszafraniec2}, if $T(f)$ is finite then each  cohomology grup $H^i(S_r)$ is trivial for  $i\geq 1$.
Hence, if there exists $i\geq 1$ with $H^i(S_r)\neq 0$ then $T(f)$ is infinite.

In \cite{dzedzejszafraniec} (see also Section \ref{rozdzial_2}) there is presented an intrinsic  filtration of $T(f)$ given  in terms 
of characteristic exponents and asymptotic critical values of $f$. Unfortunately, these numbers are difficult
to compute. This is why in this paper we present methods which are more easy to apply.

Let $\omega:\R^n,0\rightarrow\R,0$ be the homogeneous initial form associated with $f$.
In some cases investigating properties of this form may provide simple conditions which guarantee that $T(f)$ is infinite.
Put $\Omega=S^{n-1}\cap\{\omega<0\}$. 
We shall show that $T(f)$ is infinite if at least one cohomology group $H^i(\Omega)$, where $i\geq 1$, is non-trivial.
Applying the Moussu results \cite{moussu}  one may also show that the same holds true if  there exists at least one
non-degenerate
critical point of $\omega|\Omega$ which is not a local minimum.

However, if $n=2$ and $S_r\neq S^1_r$ then none of the above assumptions would hold,
but $T(f)$ may be infinite. (See Example \ref{plik_8_3}.)

The main result of this paper says that $T(f)$ is infinite if
$\rank\, H^0(S_r)< \rank\, H^0(\Omega)$, 
that is $S_r$ has less
connected components than $\Omega$.

As a corollary we shall show that the inequality $\chi(S_r)<\chi(\Omega)$ implies that $T(f)$ is infinite.
(It is proper to add that there exist efficient methods of computing these Euler-Poincar\'e characteristics (see \cite{leckiszafraniec1}, \cite{szafraniec12}).)

In \cite{szafraniec_stab} there are presented sufficient conditions
for the stable set of the origin to have a non-empty interior which can be
applied to our problem in the case where $n\geq 3$.

It is worth pointing out that according to Moussu \cite[Theorem 3]{moussu} the family $T(f)$ always contain
trajectories which are represented by real analytic curves converging to the origin. In some cases
a family of those analytic curves can be infinite.

The paper is organized as follows. In Section \ref{rozdzial_1} we prove preliminary results about the homotopy type
of some semi-analytic sets. In Section \ref{rozdzial_2} we present properties of important geometric invariants
associated with trajectories of the gradient flow. In Section \ref{rozdzial_3} we prove the main result
of this paper (Theorem \ref{plik_8_1}), and we show how to apply it.
References \cite{bohmetal}, \cite{goldstein}, \cite{lageman}, \cite{nowelszafraniec2} present  significant related results and
applications.

\section{Preliminaries.}\label{rozdzial_1}

Let $f:\R^n,0\rightarrow\R,0$ be an analytic function defined in an open neighbourhood
of the origin. Let $\Q^+$ denote the set of positive rationals.
For $\ell\in\Q^+$, $a<0$, $y<0$ and $r>0$ we shall write
$$B_r^n=\{x\in\R^n\ |\ |x|\leq r\}\ ,\ \  S_r^{n-1}=\{x\in\R^n\ |\ |x|=r\},$$
$$V^{\ell,a}=\{x\in\R^n\setminus\{0\}\ |\ f(x)\leq a|x|^\ell\} ,\ $$
$$S_r^{\ell,a}=S_r^{n-1}\cap V^{\ell,a}\ ,\ \ B_r^{\ell,a}=B_r^n\cap V^{\ell,a},\ \ 
F^{\ell,a}(y)=f^{-1}(y)\cap V^{\ell,a},$$
$$D^{\ell,a}(y)= f^{-1}([y,0))\cap V^{\ell,a}=\{ x\in V^{\ell,a}\ |\ y\leq f(x)<0\}.$$

\begin{lemma} \label{plik_6_1} Assume that $\ell\in\Q^+$ and $a<0$.
If $0<-y\ll r\ll 1$ then
the sets $S_r^{\ell,a}$ and $F^{\ell,a}(y)$ are homotopy
equivalent. In particular, the cohomology groups $H^*(S_r^{\ell,a})$ and 
$H^*(F^{\ell,a}(y))$ are isomorphic.
\end{lemma}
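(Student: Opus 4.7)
Proof sketch plan:

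The plan is to realise both $S_r^{\ell,a}$ and $F^{\ell,a}(y)$ as strong deformation retracts of the common compact semi-analytic region
\[
P\ =\ V^{\ell,a}\cap B_r^n\cap\{f\le y\},
\]
which would give $S_r^{\ell,a}\simeq P\simeq F^{\ell,a}(y)$ and hence the claimed cohomology isomorphism. I interpret the hypothesis ``$0<-y\ll r\ll 1$'' as $|y|<|a|r^\ell$; under this bound, every $x\in S_r^{\ell,a}$ satisfies $f(x)\le ar^\ell<y$, so $S_r^{\ell,a}$ forms the outer spherical face of $\partial P$, whereas every $x\in F^{\ell,a}(y)$ satisfies $|x|^\ell\le |y|/|a|<r^\ell$, so $F^{\ell,a}(y)\subset\{|x|<r\}$ is the inner level-set face of $\partial P$. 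The remainder of $\partial P$ lies on $\partial V^{\ell,a}=\{f=a|x|^\ell\}$ and is a collar joining $\partial F^{\ell,a}(y)$ to $\partial S_r^{\ell,a}$. Note that $P$ stays away from the origin: on $P$ the constraints $f\le y$ and $f\le a|x|^\ell$ force $a|x|^\ell\le y$, hence $|x|\ge(|y|/|a|)^{1/\ell}>0$, so no singularity at $0$ has to be dealt with.

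Next I would construct two smooth vector fields $W_1,W_2$ defined on a neighbourhood of $P$, each tangent to $\partial V^{\ell,a}$, such that $W_1\cdot\nabla|x|^2>0$ and $W_2\cdot\nabla f>0$ throughout the interior of $P$. Tangency of $W_i$ to $\partial V^{\ell,a}$ keeps trajectories inside $V^{\ell,a}$; the monotonicity together with the compactness of $P$ guarantees that the forward flow of $W_1$ sweeps $P$ outward and terminates on $S_r^{\ell,a}$, and similarly that the flow of $W_2$ sweeps $P$ toward higher values of $f$ and terminates on $F^{\ell,a}(y)$. Each flow provides a strong deformation retraction of $P$ onto the respective face, so both faces are homotopy equivalent to $P$ and hence to each other.

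The main technical obstacle is the construction of $W_1$ and $W_2$: on $\partial V^{\ell,a}$ the naive choice $\nabla|x|^2$ (respectively $\nabla f$) could be parallel to the outward normal $\nabla(f-a|x|^\ell)$, in which case its tangential projection vanishes and the vector field has zeros inside $P$. The set of such bad points is semianalytic, and if it accumulated at the origin the Curve Selection Lemma would yield an analytic arc $\gamma(t)\to 0$ along which $\nabla f$ and $\nabla|x|^\ell$ are proportional while $f(\gamma(t))=a|\gamma(t)|^\ell$ holds identically. Differentiating this identity and invoking a {\L}ojasiewicz inequality bounding $|\nabla f|$ from below in terms of $|f|$ produces a contradiction once $r$, and subsequently $|y|$, are chosen small enough. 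Away from $\partial V^{\ell,a}$ one simply takes $W_1=\nabla|x|^2$ and $W_2=\nabla f$, and glues to the tangential projections near $\partial V^{\ell,a}$ by a smooth partition of unity.
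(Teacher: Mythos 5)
Your strategy---realizing $S_r^{\ell,a}$ and $F^{\ell,a}(y)$ as deformation retracts of a single compact region $P$ by integrating vector fields---is genuinely different from the paper's proof, which applies Hardt's local triviality theorem to the proper maps $|x|$ and $f$ restricted to $V^{\ell,a}$ and then compares the nested regions $D^{\ell,a}(y)\subset B_r^{\ell,a}\subset D^{\ell,a}(y_0)\subset B_{r_0}^{\ell,a}$ by a sandwich argument. As written, however, your sketch has two genuine gaps. The first is that neither flow is actually confined to $P$, so neither one defines a deformation retraction of $P$. For $W_1$ you impose only tangency to $\partial V^{\ell,a}$ and $W_1\cdot\nabla|x|^2>0$; a trajectory starting at a point of $P$ with $|x|<(|y|/|a|)^{1/\ell}$ can have $f$ rise above $y$ (membership in $V^{\ell,a}$ only forces $f\le a|x|^\ell$, which exceeds $y$ at those radii) and so exit $P$ through the face $\{f=y\}$ before reaching the sphere; symmetrically, the $W_2$-flow does not control $|x|$ and can exit through $\{|x|=r\}$. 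Repairing this forces $W_1$ to satisfy $W_1\cdot\nabla f\le 0$ near the corner $\{f=y\}\cap\{f=a|x|^\ell\}$ simultaneously with $W_1\cdot x>0$ and the tangency condition, and at a corner point where $\nabla f$ is a positive multiple of $x$ no such vector exists; this compatibility problem is exactly what the single-region trick was supposed to avoid, and your sketch does not address it. (Also, the inference ``$f\le y$ and $f\le a|x|^\ell$ force $a|x|^\ell\le y$'' is a non sequitur---two upper bounds on $f$ do not compare with each other---although $P$ does avoid the origin, simply because $|f|\ge|y|$ on $P$ while $f(0)=0$.)

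The second gap is that the curve-selection/{\L}ojasiewicz step fails. Along an analytic arc in $\{f=a|x|^\ell\}$ on which $\nabla f$ is radial, differentiating the identity $f(\gamma(t))=a|\gamma(t)|^\ell$ yields $\partial_r f=a\ell|x|^{\ell-1}$, i.e.\ $|\nabla f|=\ell\,|f|/|x|$, which is entirely consistent with the gradient inequality $|\nabla f|\ge C|f|^\theta$, $\theta<1$; no contradiction results. Such arcs genuinely occur: if $f$ is homogeneous of degree $d$, $\ell=d$, and $a<0$ is a critical value of $f|S^{n-1}$, then the whole ray through a critical point at level $a$ consists of your ``bad'' points, and $\{f=a|x|^\ell\}$ can be singular along it, so the tangential projection is not even defined. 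These are precisely the characteristic exponents and asymptotic critical values of Section 3, and since the lemma is asserted for every $a<0$ they cannot be discarded. The paper's route through Hardt's stratification theorem, which needs neither smoothness of $\partial V^{\ell,a}$ nor any transversality, is designed to bypass exactly this difficulty; if you want a flow-based proof you would at least have to restrict to the pairs $(\ell,a)$ outside the finite exceptional set $L'(f)$, which is weaker than the statement of the lemma.
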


\begin{proof} For $x\in  V^{\ell,a}\cup\{0\}$ lying sufficiently close to the origin we have $|x|^{1/2}\geq |f(x)|\geq |a|\cdot |x|^\ell$,
so that in particular functions $f(x)$, $|x|^2$  restricted to this set are proper.
According to the local triviality of analytic mappings (see \cite{hardt}), they are locally trivial.
So there is $r_0>0$ such that $|x|:B_{r_0}^{\ell,a}\rightarrow (0,r_0]$ is a trivial fibration.
Hence the inclusion $S_r^{\ell,a}\subset B_r^{\ell,a}$ is a homotopy equivalence
for  each $0<r\leq  r_0$.

By similar arguments, there is $y_0<0$ such that $D^{\ell,a}(y_0)\subset B_{r_0}^{\ell,a}$ and
 $f:D^{\ell,a}(y_0)\rightarrow [y_0,0)$ is
a trivial fibration, so that the inclusion $F^{\ell,a}(y)\subset D^{\ell,a}(y)$
is a homotopy equivalence for each $y_0\leq y<0$.

 So, if $0<-y\ll r\ll 1$ then we may assume that $r\leq r_0$, $y_0\leq y$, and
$$D^{\ell,a}(y)\subset B_r^{\ell,a}\subset D^{\ell,a}(y_0)\subset B_{r_0}^{\ell,a}.$$
As inclusions $D^{\ell,a}(y)\subset D^{\ell,a}(y_0)$ and $B_r^{\ell,a}\subset B_{r_0}^{\ell,a}$
are homotopy equivalencies, then $D^{\ell,a}(y)\subset B_r^{\ell,a}$ is a homotopy
equivalence too. Then $F^{\ell,a}(y)$ is homotopy equivalent to $S_r^{\ell,a}$.
\end{proof}

 For $0<-y\ll r\ll 1$ we shall write
$$ F_r(y)=B_r^n\cap f^{-1}(y)\ ,\ \ S_r=\{x\in S_r^{n-1}\ |\ f(x)<0\},$$
We call the set $F_r(y)$ the {\it real Milnor fibre}.
According to \cite{milnor}, it is either an $(n-1)$-dimensional compact manifold with boundary or an empty set.
Moreover, the sets $F_r(y)$ and $S_r$   are homotopy equivalent.

\begin{cor}\label{plik_6_2}
If $0<-y\ll r\ll 1$ then the cohomology groups $H^*(S_r)$ and $H^*(F_r(y))$ are isomorphic.
\end{cor}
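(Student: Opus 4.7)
The plan is to invoke the homotopy equivalence $F_r(y)\simeq S_r$ asserted in the paragraph just above the corollary (a real Milnor-type theorem). Since homotopy equivalent spaces have isomorphic singular cohomology, the conclusion $H^*(S_r)\cong H^*(F_r(y))$ follows in one line, and this is the proof I would write.

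A more self-contained alternative, staying within the technical framework of Lemma \ref{plik_6_1}, would proceed by first identifying (or deformation retracting) $S_r$ onto some $S_r^{\ell,a}$ and $F_r(y)$ onto the corresponding $F^{\ell,a}(y)$, and then quoting Lemma \ref{plik_6_1} to conclude. The set-theoretic identification $S_r=S_r^{\ell,a}$ would follow from a {\L}ojasiewicz-type inequality $|f(x)|\geq|a|\cdot|x|^\ell$ on $\{f<0\}$ near $0$; when this holds---for instance when $0$ is isolated in $f^{-1}(0)$---the argument is straightforward, since one then checks similarly that any solution of $f(x)=y$ in $V^{\ell,a}$ automatically satisfies $|x|\leq (|y|/|a|)^{1/\ell}\leq r$.

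The hard part in this self-contained route will be that the {\L}ojasiewicz bound in $|x|$ does not always hold on $\{f<0\}$: for instance, for $f(x,y)=-yx^2$ the set $\{f<0\}$ approaches $0$ tangentially to $\{f=0\}$ along curves requiring arbitrarily large exponents, so no single $\ell$ works. In such cases one would have to replace the set-theoretic identification by a deformation retraction argument using the weaker {\L}ojasiewicz inequality $|f(x)|\geq c\cdot\dist(x,f^{-1}(0))^\ell$ together with the local triviality of analytic mappings \cite{hardt}, exactly as in the proof of Lemma \ref{plik_6_1}. It is precisely this delicate analysis that makes invoking the already-stated Milnor-type result the more efficient strategy.
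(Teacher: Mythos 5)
Your primary plan is exactly the paper's justification: the corollary is stated immediately after the paragraph asserting (citing Milnor) that $F_r(y)$ and $S_r$ are homotopy equivalent, and the isomorphism of cohomology groups is the immediate consequence. The additional discussion of a self-contained route via Lemma \ref{plik_6_1} is not needed and is not what the paper does, but your one-line argument is correct and matches the intended proof.
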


Let $\omega$ be the initial form associated with $f$ and let $g=f-\omega$,
so that $f=\omega+g$. Denote by $d$ the degree of $\omega$. Hence $g=O(|x|^{d+1})$.

\begin{lemma}\label{plik_5_1} If $0<r\ll -a\ll 1$ then sets $S_r^{d,a}=S_r^{n-1}\cap\{f\leq a r^d\}$, 
$S_r^{n-1}\cap\{\omega\leq a r^d\}$ and $\Omega=S^{n-1}\cap\{\omega<0\}$
have the same homotopy type.
\end{lemma}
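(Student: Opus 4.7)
The plan is to rescale via $x=r\theta$ with $\theta\in S^{n-1}$, transporting all three sets onto the unit sphere, and to connect them by (i) a tangential gradient-flow retraction from $\Omega$ onto $\{\omega\le a\}$ and (ii) an Ehresmann-trivialisation argument comparing $\{\omega\le a\}$ with $\{f_r\le a\}$ for a suitable smooth family $f_r$. Since $\omega$ is homogeneous of degree $d$, $\omega(r\theta)=r^d\omega(\theta)$, so $\theta\mapsto r\theta$ yields a homeomorphism $S^{n-1}\cap\{\omega\le a\}\cong S_r^{n-1}\cap\{\omega\le ar^d\}$. Writing $g(r\theta)=r^{d+1}\tilde g(r,\theta)$ with $\tilde g$ smooth (even analytic) on $[0,r_0]\times S^{n-1}$, which is possible because $g$ is analytic and $g(x)=O(|x|^{d+1})$, the condition $f(r\theta)\le ar^d$ becomes $f_r(\theta):=\omega(\theta)+r\tilde g(r,\theta)\le a$ with $f_0=\omega$; the three sets correspond respectively to $S^{n-1}\cap\{f_r\le a\}$, $S^{n-1}\cap\{\omega\le a\}$, and $\Omega$.

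For (i), I would use that $\omega|_{S^{n-1}}$ is a polynomial function on a compact manifold, so its critical set is semi-algebraic and its critical values form a finite subset of $\R$; choose $a_0<0$ with $(a_0,0)$ free of critical values, and fix $a\in(a_0,0)$. For any $c_0\in[a,0)$ the compact set $\{a\le\omega\le c_0\}$ has no critical points, so $|\nabla_{S^{n-1}}\omega|$ is bounded below there by some positive constant. Starting at any $\theta_0\in\Omega$ with $\omega(\theta_0)=c_0>a$, the tangential flow of $-\nabla_{S^{n-1}}\omega$ decreases $\omega$ at a rate at least that constant squared and hence reaches $\{\omega=a\}$ in finite time $T(\theta_0)$; rescaling time by $s\in[0,1]$ and using that $T$ is continuous on $\Omega$ (by transversality at $\{\omega=a\}$) produces a deformation retraction $\Omega\to\{\omega\le a\}$.

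For (ii), keep $a\in(a_0,0)$ fixed. Regularity of $a$ combined with compactness yields $|\nabla_{S^{n-1}}\omega|\ge c>0$ in a neighbourhood $U$ of $\{\omega=a\}$; since $f_r\to\omega$ in $C^1$ as $r\to 0^+$, $r_0$ can be chosen so small that for every $\rho\in[0,r_0]$ the level set $\{f_\rho=a\}$ lies in $U$ and $|\nabla_{S^{n-1}}f_\rho|\ge c/2$ there. Then
$$W=\{(\rho,\theta)\in[0,r_0]\times S^{n-1}\mid f_\rho(\theta)\le a\}$$
is a compact smooth manifold with boundary whose projection to $[0,r_0]$ is a proper submersion (including on the boundary face $\{f_\rho=a\}$, because at each such point the equation $\partial_\rho f_\rho+d_\theta f_\rho\cdot v_\theta=0$ can be solved thanks to the lower bound on $d_\theta f_\rho$), so by Ehresmann's theorem it is a trivial fibration over $[0,r_0]$, giving $\{f_r\le a\}\cong\{\omega\le a\}$. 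Chaining the equivalences and undoing the rescaling finishes the proof. The main obstacle is the correct ordering of limits: only after $-a$ has been made small enough that $(a_0,0)$ avoids critical values of $\omega|_{S^{n-1}}$ may $r$ be chosen so small that the $C^1$-perturbation $f_r-\omega=r\tilde g(r,\cdot)$ is dominated by the now-fixed gradient bound $c(a)$, which is precisely the meaning of $0<r\ll-a\ll 1$.
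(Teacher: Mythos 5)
Your proposal is correct and follows essentially the same route as the paper: rescale to the unit sphere to write $f(r\theta)=r^d(\omega(\theta)+r\tilde g(r,\theta))$, pick $a$ among the regular values of $\omega|_{S^{n-1}}$ near $0$, compare $\{\omega\le a\}$ with $\{f_r\le a\}$ via Ehresmann's theorem, and retract $\Omega$ onto $\{\omega\le a\}$. The only difference is cosmetic: you justify the finiteness of critical values by semi-algebraicity and spell out the gradient-flow retraction, where the paper cites Milnor's Corollary~2.8 and asserts the retraction without detail.
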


\begin{proof}
For $r\in\R$ sufficiently close to zero and $x\in S^{n-1}$ we have
$$f(rx)=\omega(rx)+g(rx)=r^d\omega(x)+r^{d+1}G(x,r),$$
where $G(x,r)$ is an analytic function defined in an open neighbourhood of $S^{n-1}\times\{0\}$.
Put $H(x,r)=\omega(x)+r G(x,r)$, and $H_r=H(\cdot,r):S^{n-1}\rightarrow\R$.

By \cite[Corollary 2.8]{milnor}, there exists $a_0<0$ such that any $a_0<a<0$ is a regular value of $\omega|S^{n-1}$.
Hence there exists $r_0>0$ such that $a$ is a regular value of every $H_r$, where $-r_0<r<r_0$.
Then 
$$\{(x,r)\in S^{n-1}\times (-r_0,r_0)\ |\ H(x,r)\leq a\}$$
is an $n$-dimensional manifold with boundary $S^{n-1}\times (-r_0,r_0)\cap H^{-1}(a)$.
By the implicit function theorem, the mapping $(x,r)\mapsto r$ restricted to both above manifolds
is a proper submersion. By Ehresmann's theorem, it is a locally trivial
fibration. Hence if $r$ is sufficiently close to zero then the manifolds
$ S^{n-1}\cap \{\omega\leq a\}=\{x\in S^{n-1}\ |\ H(x,0)\leq a\}$ and 
$S^{n-1}\cap \{ H_r\leq a\}=\{x\in S^{n-1}\ |\ H(x,r)\leq a\}$ are homeomorphic.

 The set $S^{n-1}\cap\{\omega\leq a\}$ is a deformation retract
of $\Omega=S^{n-1}\cap\{\omega<0\}$,
so that these sets have the same homotopy type.

We have $f(rx)=r^d H_r(x)$. Hence $x\in S^{n-1}\cap\{H_r\leq a\}$ if and only if
$rx\in S_r^{n-1}\cap\{f\leq a r^d\}$, 
and the proof is complete.
\end{proof}

\section{Geometric invariants of the function.} \label{rozdzial_2}
In the beginning of this section we present some results obtained by Kurdyka {\it et al.} \cite{kurdykaetal}, \cite{kurdykaparusinski} in the course of proving Thom's gradient conjecture. In exposition and notation we follow closely these papers.

Let $f:\R^n,0\rightarrow\R,0$ be an analytic function defined in a neighbourhood of the origin,
having a critical point at $0$. The gradient $\nabla f(x)$ splits into its radial component
$\frac{\partial f}{\partial r}(x)\frac{x}{|x|}$ and the spherical one
$\nabla'f(x)=\nabla f(x)-\frac{\partial f}{\partial r}(x)\frac{x}{|x|}$.
We shall denote $\frac{\partial f}{\partial r}$ by $\partial_rf$.

For $\epsilon>0$ define
$W^{\epsilon}=\{x\ |\ f(x)\neq 0\ ,\ \epsilon |\nabla'f|\leq |\partial_r f|\}$.
There exists a finite subset of positive rationals $L(f)\subset\Q^+$ such that for any $\epsilon>0$ and any sequence
$W^{\epsilon}\ni x\rightarrow 0$ there is a subsequence $W^{\epsilon}\ni x'\rightarrow 0$
and $\ell\in L(f)$ such that
$$\frac{|x'|\ \partial_r f(x')}{f(x')}\ \rightarrow \ \ell\ .$$
Elements of $L(f)$ are called {\it characteristic exponents}.

Fix $\ell >0$, not necessarily in $L(f)$, and consider $F=f/|x|^{\ell}$ defined in the complement of the origin.
We say that $a\in\R$ is {\it  an asymptotic critical value} of $F$ at the origin if there exists a sequence $x\rightarrow 0$, $x\neq 0$,
such that
$$  |x|\cdot \left|\nabla   F(x)\right|\ \rightarrow 0\ \ ,\ \ 
F(x)=\frac{f(x)}{|x|^\ell}\ \rightarrow\ a\ . $$
The set of asymptotic critical values of $F$ is finite.

The real number $a\neq 0$ is an asymptotic critical value if and only if there exists a sequence
$x\rightarrow 0$, $x\neq 0$, such that
$$ \frac{|\nabla'f(x)|}{|\partial_r f(x)|}\ \rightarrow 0\ \ ,\ \ 
  \frac{f(x)}{|x|^\ell}\ \rightarrow\ a\ . $$

Hence the set
$$L'(f)\ =\ \{(\ell,a)\ |\ \ell\in L(f), a < 0\ \mbox{is an asymptotic critical value of}\ f/|x|^{\ell}\}$$
is a finite subset of $\Q^+\times \R_-$, where $\R_-$ is the set of negative real numbers.

We shall write $T(f)$ for the set of non-trivial trajectories of the gradient flow
$\dot{x}=\nabla f(x)$ converging to the origin. By Section 6 of \cite{kurdykaetal}, for every such a trajectory
$x(t)$, with $x(t)\rightarrow 0$,
there exists a unique pair $(\ell',a')\in L'(f)$ such that $f(x(t))/|x(t)|^{\ell'} \rightarrow a'$.
There is a natural partition of $T(f)$ associated with $L'(f)$. Namely for $(\ell',a')\in L'(f)$,
$$T^{\ell',a'}(f)=\{ x(t)\in T(f)\ |\ f(x(t))/|x(t)|^{\ell'}\rightarrow a'\ \mbox{ as } x(t)\rightarrow 0\}.$$
In the set $\Q^+\times\R_-$ we may introduce the lexicographic order
$$(\ell',a')\leq (\ell,a)\ \ \mbox{if}\ \ \ell'<\ell, \ \mbox{or}\ \ell'=\ell\ \mbox{and}\ a'\leq a.$$
Take $(\ell,a)\in \Q^+\times\R_-\setminus L'(f)$. 
We  shall write 
$$\tilde{T}^{\ell,a}(f)=\bigcup T^{\ell',a'}(f), \mbox{ where } (\ell',a')<(\ell,a) \mbox{ and }(\ell',a')\in L'(f).$$

According to \cite{nowelszafraniec1}, there are $0<-y\ll r\ll 1$ such that each trajectory  $x(t)\in T(f)$ intersects $F_r(y)$ transversally at exactly one point.
Let $\Gamma(f)\subset F_r(y)$ be the union of all those points. So there is a natural one-to-one correspondence between trajectories in $T(f)$ and 
points in $\Gamma(f)$. The same way one may define the set $\Gamma^{\ell',a'}(f)\subset F_r(y)$  (resp. $\tilde{\Gamma}^{\ell,a}(f)\subset F_r(y)$)
whose points are in one-to-one correspondence with trajectories from $T^{\ell',a'}(f)$ (resp. $\tilde{T}^{\ell,a}(f)$).
In particular, for $(\ell,a)\in \Q^+\times\R_-\setminus L'(f)$ the set
$$\tilde{\Gamma}^{\ell,a}(f)=\bigcup \Gamma^{\ell',a'}(f), \mbox{ where } (\ell',a')<(\ell,a) \mbox{ and }(\ell',a')\in L'(f),$$
is a subset of $\Gamma(f)$.

By \cite[Theorem 12]{nowelszafraniec1}, \cite[Theorem 6]{dzedzejszafraniec} we have

\begin{theorem} \label{plik_2_1}  
If $0<-y\ll r\ll 1$ then the inclusion $\Gamma(f)\subset F_r(y)$ induces an isomorphism
$$\bar{H}^*(\Gamma(f))\ \simeq\ H^*(F_r(y)),$$
where $\bar{H}^*(\cdot)$ is the \v{C}ech-Alexander cohomology group.
In particular $\Gamma(f)$ has the same (finite) number of connected components as $F_r(y)$.

Moreover, for every $(\ell,a)\in\Q^+\times\R_-\setminus L'(f)$ the set
$\tilde\Gamma^{\ell,a}(f)$ is a closed subset of
$F^{\ell,a}(y)$. The inclusion
 induces an isomorphism
$$\bar{H}^*(\tilde\Gamma^{\ell,a}(f))\ \simeq\ H^*(F^{\ell,a}(y)).$$

\end{theorem}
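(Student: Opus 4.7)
The plan is to realize $\Gamma(f)$ as the intersection of a decreasing family of compact subsets of $F_r(y)$ obtained by tracking how far back along $\nabla f$ each orbit can be pushed, and then to invoke the continuity of \v{C}ech-Alexander cohomology on compact Hausdorff inverse systems. For each pair $(r',y')$ with $0<-y'\ll r'\ll r$, the gradient flow sends a point of $F_{r'}(y')$ forward to the first intersection of its orbit with $F_r(y)$, defining a partial map whose image $\Gamma_{r',y'}\subset F_r(y)$ consists of those points whose backward $\nabla f$-orbit reaches $F_{r'}(y')$ without leaving $B_r^n$. The $\Gamma_{r',y'}$ form a nested family of compact sets whose intersection is exactly $\Gamma(f)$, by the {\L}ojasiewicz convergence theorem.

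First I would choose $r,y$ so that $\nabla f$ is transverse to both $S^{n-1}_r$ and $f^{-1}(y)$ throughout $B_r^n\cap\{f<0\}$; this is possible for $0<-y\ll r\ll 1$ by the standard Milnor-type stratification of $f$ near $0$. The gradient flow then gives a strong deformation retraction of $F_r(y)$ onto $\Gamma_{r',y'}$: push each point forward along $\nabla f$ until it either returns to $\{f=y\}$ (and so stays in $F_r(y)$) or first meets the inner boundary $f^{-1}(y')\cup S^{n-1}_{r'}$, and then collapse the exit locus by a canonical retraction of that boundary. The analogous statement holds for each inclusion $\Gamma_{r'',y''}\subset\Gamma_{r',y'}$ with a finer pair, so the inverse system $\{H^*(\Gamma_{r',y'})\}$ is constant and isomorphic to $H^*(F_r(y))$. \v{C}ech continuity then yields $\bar{H}^*(\Gamma(f))=\varprojlim H^*(\Gamma_{r',y'})\simeq H^*(F_r(y))$, and finiteness of the number of components of $\Gamma(f)$ follows from finite generation of $H^0(F_r(y))$.

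For the second part I would mimic this construction inside $V^{\ell,a}$, working with the sets $F^{\ell,a}(y)$ and $\Gamma^{\ell,a}_{r',y'}=\Gamma_{r',y'}\cap V^{\ell,a}$. The hypothesis $(\ell,a)\notin L'(f)$ enters twice. First, it forbids any trajectory in $T(f)\setminus\tilde T^{\ell,a}(f)$ from satisfying $f(x(t))/|x(t)|^\ell\to a$, so a limit of endpoints of such trajectories cannot lie on the boundary hypersurface $\{f=a|x|^\ell\}\cap F^{\ell,a}(y)$; this is exactly what is needed for $\tilde\Gamma^{\ell,a}(f)$ to be closed in $F^{\ell,a}(y)$. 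Second, since $a$ is not an asymptotic critical value of $f/|x|^\ell$, the hypersurface $\{f=a|x|^\ell\}$ is transverse to $\nabla f$ near $0$ inside $\{f<0\}$, so the retractions constructed above preserve the constraint $f\leq a|x|^\ell$. The same inverse-limit argument then yields $\bar{H}^*(\tilde\Gamma^{\ell,a}(f))\simeq H^*(F^{\ell,a}(y))$.

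The most delicate step is the deformation retraction of $F_r(y)$ onto $\Gamma_{r',y'}$: one must show that the exit time of each non-converging orbit from the annular region $\{y'\leq f\leq y\}\cap B_r^n$ is a continuous function of the initial condition, and that the orbit exits across a piece of the boundary along which the retraction can be defined coherently. This rests on the {\L}ojasiewicz gradient inequality, on the Kurdyka-Parusinski control of the radial versus spherical components of $\nabla f$ recalled in Section \ref{rozdzial_2}, and on the transversality statements above. These ingredients are precisely those assembled in the proofs of Theorem 12 of \cite{nowelszafraniec1} and Theorem 6 of \cite{dzedzejszafraniec} from which the present theorem is quoted.
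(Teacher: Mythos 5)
The first thing to note is that the paper contains no proof of Theorem \ref{plik_2_1}: it is imported verbatim from \cite[Theorem 12]{nowelszafraniec1} and \cite[Theorem 6]{dzedzejszafraniec}, so there is no in-paper argument to compare yours against. Your outline does follow the same general strategy as those references --- realize $\Gamma(f)$ as a decreasing intersection of compact ``trapped'' subsets of $F_r(y)$, show each is a deformation retract of $F_r(y)$, and conclude by continuity of \v{C}ech--Alexander cohomology --- so the skeleton is the right one. (A small correction: continuity of \v{C}ech cohomology for a decreasing family of compacta gives $\bar{H}^*\bigl(\bigcap\Gamma_{r',y'}\bigr)\simeq\varinjlim \bar{H}^*(\Gamma_{r',y'})$, a direct limit, not $\varprojlim$; since all bonding maps are isomorphisms here this does not affect the conclusion.)

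As written, however, the sketch contains one step that cannot work and one step that is only asserted. Along a nonconstant orbit of $\dot{x}=\nabla f$ the value of $f$ is strictly increasing, so no orbit starting on $f^{-1}(y)$ ever ``returns to $\{f=y\}$''; likewise a point of $F_{r'}(y')$ (with $y<y'<0$) reaches $F_r(y)$ by flowing backward, and $\Gamma_{r',y'}$ should be the set of points of $F_r(y)$ whose \emph{forward} orbit reaches the level $y'$ without leaving the ball. More seriously, the deformation retraction of $F_r(y)$ onto $\Gamma_{r',y'}$ is the entire content of the cited theorems, and your description of it (``collapse the exit locus by a canonical retraction of that boundary'') presupposes exactly what has to be proved: that the exit set on $S_r^{n-1}$ with $f$-values in $[y,y')$ retracts onto $S_r^{n-1}\cap f^{-1}(y)$ compatibly with the flow, which requires that $f|S_r^{n-1}$ have no critical values in that range --- this is where the conditions $0<-y\ll r\ll 1$ and the curve selection lemma actually enter, and none of it is carried out. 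The same remark applies to the relative statement: the closedness of $\tilde\Gamma^{\ell,a}(f)$ in $F^{\ell,a}(y)$ and the compatibility of the retraction with the constraint $f\leq a|x|^\ell$ are precisely the content of \cite[Theorem 6]{dzedzejszafraniec}; your appeal to $(\ell,a)\notin L'(f)$ identifies the correct hypothesis but does not derive these facts from it. In short: right architecture, but the load-bearing homotopy equivalences are not established, and the description of the flow must be repaired before they could be.
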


\begin{rem}\label{plik_2_1_1}
If $\Gamma(f)$ is infinite then it contains at least one  compact and infinite connected component,
which is obviously not a zero-dimensional space.
If that is the case then the Menger-Urysohn dimension as well as the \v{C}ech-Lebesgue covering dimension 
of this component is at least one (see \cite{engelking}).
\end{rem}

By Lemma \ref{plik_6_1} and Corollary \ref{plik_6_2} we get

\begin{cor}\label{plik_2_3} There is an isomorphism
$\bar{H}^*(\Gamma(f))\ \simeq\ H^*(S_r)$. 
In particular $\Gamma(f)$ has the same (finite) number of connected components as $S_r$.

Moreover, for every $(\ell,a)\in\Q^+\times\R_-\setminus L'(f)$, if $0< r\ll 1$ then
$$\bar{H}^*(\tilde\Gamma^{\ell,a}(f))\ \simeq\ H^*(S_r^{\ell,a}).$$
In particular, if there exists $i\geq 1$ such that $H^i(S_r)\neq 0$ then $T(f)$ is infinite.
So, if $S_r\neq \emptyset$ and the Euler-Poincar\'e characteristic $\chi(S_r)\leq 0$,
then $T(f)$ is infinite.
\end{cor}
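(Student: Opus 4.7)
The plan is to obtain both isomorphisms by chaining identifications that are already in hand, and then to derive the remaining assertions as direct consequences. First I would combine the isomorphism $\bar{H}^*(\Gamma(f))\simeq H^*(F_r(y))$ of Theorem \ref{plik_2_1} with the identification $H^*(F_r(y))\simeq H^*(S_r)$ of Corollary \ref{plik_6_2} to obtain $\bar{H}^*(\Gamma(f))\simeq H^*(S_r)$; the statement on connected components is then just the degree-zero content of this isomorphism, together with the finiteness recorded in Theorem \ref{plik_2_1}. For the localized version I would analogously compose the second part of Theorem \ref{plik_2_1} with Lemma \ref{plik_6_1} applied to the given pair $(\ell,a)\in\Q^+\times\R_-\setminus L'(f)$, yielding
$\bar{H}^*(\tilde\Gamma^{\ell,a}(f))\simeq H^*(F^{\ell,a}(y))\simeq H^*(S_r^{\ell,a})$.

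For the infiniteness criterion, I would argue by contraposition. Assume $T(f)$ is finite. Through the one-to-one correspondence between $T(f)$ and $\Gamma(f)$ recalled in Section \ref{rozdzial_2}, $\Gamma(f)$ is then a finite subset of the manifold $F_r(y)$, hence a finite discrete topological space; Remark \ref{plik_2_1_1} is what makes this reduction legitimate, since it asserts that an infinite $\Gamma(f)$ must carry a compact connected component of positive covering dimension, so the finite case is genuinely the zero-dimensional one. The \v{C}ech--Alexander cohomology of a finite discrete space vanishes in positive degrees, and the first isomorphism transports this vanishing to $H^i(S_r)=0$ for every $i\geq 1$, contradicting the hypothesis.

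The statement about the Euler--Poincar\'e characteristic is then essentially formal. The set $S_r$ is compact semi-analytic, so its cohomology is finite-dimensional and $\chi(S_r)$ is well defined. If $H^i(S_r)$ were to vanish for every $i\geq 1$, one would have $\chi(S_r)=\rank H^0(S_r)\geq 1$ by the assumed non-emptiness of $S_r$, contradicting $\chi(S_r)\leq 0$. Hence some $H^i(S_r)$ with $i\geq 1$ must be non-trivial, and the previous paragraph applies.

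No step is a serious obstacle: the bulk of the corollary is a concatenation of Theorem \ref{plik_2_1}, Corollary \ref{plik_6_2} and Lemma \ref{plik_6_1}. The one place that deserves a genuine argument, rather than just composition of isomorphisms, is the passage \emph{$\Gamma(f)$ finite $\Rightarrow$ all higher \v{C}ech--Alexander cohomology trivial}, and I expect Remark \ref{plik_2_1_1} to be exactly the ingredient that covers this.
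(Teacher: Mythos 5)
Your proof is correct and follows essentially the same route as the paper, which obtains the corollary precisely by composing Theorem \ref{plik_2_1} with Corollary \ref{plik_6_2} and Lemma \ref{plik_6_1}, the infiniteness criteria resting on the same observation that a finite $\Gamma(f)$ is discrete and hence has trivial higher \v{C}ech--Alexander cohomology. One cosmetic slip: $S_r=S_r^{n-1}\cap\{f<0\}$ is open in the sphere and therefore not compact; the finite-dimensionality of $H^*(S_r)$ follows instead from its homotopy equivalence with the compact real Milnor fibre $F_r(y)$.
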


\begin{ex}\label{plik_2_3_1} The polynomial $f(x,y,z)=x^3+x^2z-y^2$
is weighted homogeneous. Of course $S_r\neq\emptyset$.
By \cite [p.245]{szafraniec12}, the Euler-Poincar\'e characteristic $\chi(S_r^2\cap\{f\geq 0\})=2$.
By the Alexander duality theorem we have $\chi(S_r)=0$. Hence the set $T(f)$ is infinite.
\end{ex}

\begin{prop}\label{plik_7_1} Let $d$ be the degree of the homogeneous  initial form $\omega$
associated with $f$, and let $\Omega=S^{n-1}\cap\{\omega<0\}$. If $0<-a\ll 1$ then 
$\bar{H}^*(\tilde\Gamma^{d,a}(f))\ \simeq\ H^*(\Omega)$.
  In particular,  if $H^i(\Omega)\neq 0$ for some $i\geq 1$ then $T(f)$ is infinite.
\end{prop}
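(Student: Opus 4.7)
The plan is to chain together the three tools already established in the excerpt. Specifically, I would chase the cohomology along the sequence
\[
\tilde\Gamma^{d,a}(f)\ \longleftrightarrow\ F^{d,a}(y)\ \longleftrightarrow\ S_r^{d,a}\ \longleftrightarrow\ \Omega,
\]
where the three isomorphisms come from Theorem \ref{plik_2_1}, Lemma \ref{plik_6_1}, and Lemma \ref{plik_5_1} respectively.

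First I would fix $a<0$ close enough to zero so that $(d,a)\notin L'(f)$. This is possible because $L'(f)$ is a finite subset of $\Q^+\times\R_-$, hence only finitely many $a$-values can pair with $\ell=d$ (and if $d\notin L(f)$ there are no such $a$ at all). I also require $-a\ll 1$ so that Lemma \ref{plik_5_1} applies. Then I choose $r$ with $0<r\ll -a$ (as needed by Lemma \ref{plik_5_1}), and finally $y$ with $0<-y\ll r$ (as needed by Lemma \ref{plik_6_1} and Theorem \ref{plik_2_1}). Since all three statements are of the form ``if the nested smallness conditions hold\dots'', the lexicographic order of shrinking $a$, then $r$, then $y$ makes every inequality simultaneously satisfiable.

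Once the parameters are fixed, I would apply Theorem \ref{plik_2_1} with the pair $(d,a)$ to obtain $\bar{H}^*(\tilde\Gamma^{d,a}(f))\simeq H^*(F^{d,a}(y))$. Then Lemma \ref{plik_6_1} (with $\ell=d$) gives $H^*(F^{d,a}(y))\simeq H^*(S_r^{d,a})$. Finally Lemma \ref{plik_5_1} identifies $H^*(S_r^{d,a})$ with $H^*(\Omega)$. Composition yields $\bar{H}^*(\tilde\Gamma^{d,a}(f))\simeq H^*(\Omega)$.

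For the ``In particular'' clause, assume $H^i(\Omega)\neq 0$ for some $i\geq 1$. Then $\bar{H}^i(\tilde\Gamma^{d,a}(f))\neq 0$, so $\tilde\Gamma^{d,a}(f)$ cannot be a finite (hence zero-dimensional) set, as a zero-dimensional compact space has trivial \v{C}ech cohomology in positive degrees; see also Remark \ref{plik_2_1_1}. Since $\tilde\Gamma^{d,a}(f)\subset\Gamma(f)$ and points of $\Gamma(f)$ correspond bijectively to trajectories in $T(f)$, the set $T(f)$ is infinite.

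The only real obstacle, and it is a minor one, is bookkeeping the three nested smallness regimes so that the same parameters $(a,r,y)$ can be used in all three preparatory results simultaneously; this is resolved by the order in which the quantifiers are chosen above. The rest is a direct composition of isomorphisms already supplied by the preceding sections.
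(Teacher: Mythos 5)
Your proof is correct and follows essentially the same route as the paper: the author simply cites Corollary \ref{plik_2_3} (which already packages Theorem \ref{plik_2_1} together with Lemma \ref{plik_6_1} and Corollary \ref{plik_6_2}) and then composes with Lemma \ref{plik_5_1}, noting as you do that finiteness of $L'(f)$ guarantees $(d,a)\notin L'(f)$ for $0<-a\ll 1$. Your unpacking of that corollary into its two constituent isomorphisms, and your handling of the nested smallness of $a$, $r$, $y$, match the paper's argument in substance.
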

\begin{proof} As $L'(f)$ is finite, if $0<-a\ll 1$ then $(d,a)\not\in L'(f)$.
By  Corollary \ref{plik_2_3}  and Lemma \ref{plik_5_1}, if $0<r\ll-a\ll 1$ then
$\bar{H}^*(\tilde\Gamma^{d,a}(f))\ \simeq H^*(S_r^{d,a})\ \simeq\ H^*(\Omega)$.

In particular, if $H^i(\Omega)\neq 0$, where $i\geq 1$, then
$\bar{H}^i(\tilde\Gamma^{d,a}(f))\ \neq\ 0$,
and so $\tilde\Gamma^{d,a}(f)$ is infinite. 
Hence $\tilde{T}^{d,a}(f)$, as well as $T(f)$, is infinite.
\end{proof}

\begin{cor}\label{plik_7_1_1} If $\omega$ is a quadratic form which may be reduced
to the diagonal form 
$-x_1^2-\cdots-x_{i+1}^2+x_{i+2}^2+\cdots+x_j^2,$
where $i\geq 1$, then
$$\bar{H}^i(\tilde\Gamma^{2,a}(f))\ \simeq\ H^i(\Omega)\ \simeq H^i(S^i)\ \neq\ 0.$$
Hence $T(f)$ is infinite.

\end{cor}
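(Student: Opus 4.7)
The plan is to reduce everything to Proposition \ref{plik_7_1}, which with $d=2$ already gives $\bar{H}^i(\tilde\Gamma^{2,a}(f)) \simeq H^i(\Omega)$ once $0<-a\ll 1$. So the whole task is to compute $H^i(\Omega)$ for the diagonalized quadratic form and show that $\Omega$ is homotopy equivalent to the $i$-sphere.

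Splitting coordinates as $u=(x_1,\dots,x_{i+1})$, $v=(x_{i+2},\dots,x_j)$, $w=(x_{j+1},\dots,x_n)$ (the block $w$ is empty if $j=n$), the form reads $\omega=-|u|^2+|v|^2$, so
\[
\Omega=\{(u,v,w)\in S^{n-1}\ |\ |u|^2>|v|^2\}.
\]
In particular $u\neq 0$ throughout $\Omega$, which makes it natural to retract onto the subsphere $S^i=\{(u,0,0)\ |\ |u|=1\}\subset S^{n-1}$.

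Define the homotopy
\[
h_s(u,v,w)=\frac{(u,(1-s)v,(1-s)w)}{\bigl|(u,(1-s)v,(1-s)w)\bigr|},\quad s\in[0,1].
\]
The denominator is nonzero because $u\neq 0$, so $h_s$ is well defined; $h_0$ is the identity and $h_1(u,v,w)=(u/|u|,0,0)\in S^i$. The image stays in $\Omega$ because the numerator's $u$-part and scaled $v$-part have the same normalization factor, and $|u|^2>|v|^2\ge (1-s)^2|v|^2$. Hence $h_s$ is a strong deformation retraction of $\Omega$ onto $S^i$, so $H^i(\Omega)\simeq H^i(S^i)\simeq\mathbb{Z}\neq 0$ for $i\ge 1$. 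Proposition \ref{plik_7_1} then yields $\bar{H}^i(\tilde\Gamma^{2,a}(f))\neq 0$, so $\tilde\Gamma^{2,a}(f)$, and a fortiori $T(f)$, is infinite.

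There is no real obstacle here; the only point that needs a moment's care is verifying that the straight-line-to-the-axis homotopy does not accidentally leave $\Omega$, which is handled by the one-line inequality above. Everything else is bookkeeping of the hypotheses of Proposition \ref{plik_7_1}.
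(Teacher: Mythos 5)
Correct: the paper states this corollary without proof as an immediate consequence of Proposition~\ref{plik_7_1}, and your explicit deformation retraction of $\Omega$ onto the subsphere $S^i$ is exactly the intended computation, with the containment inequality $|u|^2>|v|^2\geq(1-s)^2|v|^2$ checked properly. The only bookkeeping point left implicit is that the diagonalization may use a non-orthogonal linear change of coordinates, which changes the sphere; but since $\{\omega<0\}$ is a cone, radial retraction shows the homotopy type of $S^{n-1}\cap\{\omega<0\}$ is unaffected, so your computation in the diagonalizing coordinates suffices.
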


\begin{ex} Let $f(x,y,z)=z(x^2+y^2)+x^2y^2 z-z^4$. It is easy to see that $S_r=S_r^2\cap\{f<0\}$
is homeomorphic to a union of two disjoint 2-discs, so that $H^i(S_r)=0$ for $i\geq 1$.
As $\omega=z(x^2+y^2)$, then $\Omega$ is homeomorphic to $S^1\times(0,1)$,
and so $H^1(\Omega)\neq 0$. Hence $T(f)$ is infinite.
\end{ex}

\begin{cor}\label{plik_7_2} 
If    $\Omega\neq \emptyset$  and  the  Euler-Poincar\'e characteristic  $\chi(\Omega)\leq 0$,  then $T(f)$ is infinite.
\end{cor}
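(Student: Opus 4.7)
The plan is to derive this as a direct consequence of Proposition \ref{plik_7_1}, which already says that if $H^i(\Omega)\neq 0$ for some $i\geq 1$, then $T(f)$ is infinite. So the whole task reduces to showing that the hypotheses $\Omega\neq\emptyset$ and $\chi(\Omega)\leq 0$ force the existence of such an $i$.

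The key step is elementary bookkeeping with the Euler characteristic. The set $\Omega=S^{n-1}\cap\{\omega<0\}$ is an open semialgebraic subset of the sphere, hence has finitely generated cohomology, so $\chi(\Omega)$ is well defined and equals
$$\chi(\Omega)\;=\;\rank H^0(\Omega)\;+\;\sum_{i\geq 1}(-1)^i\rank H^i(\Omega).$$
Since $\Omega\neq\emptyset$, it has at least one connected component, so $\rank H^0(\Omega)\geq 1$. Now I argue by contradiction: if $H^i(\Omega)=0$ for every $i\geq 1$, the displayed formula would give $\chi(\Omega)=\rank H^0(\Omega)\geq 1>0$, contradicting the assumption $\chi(\Omega)\leq 0$. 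Therefore some $H^i(\Omega)$ with $i\geq 1$ is non-trivial, and Proposition \ref{plik_7_1} immediately implies that $T(f)$ is infinite.

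There is no serious obstacle here; the corollary is a formal consequence of Proposition \ref{plik_7_1} together with the definition of $\chi$. The only substantive point worth flagging is the finite-generation of $H^*(\Omega)$, which follows from the fact that $\Omega$ is a semialgebraic set and so admits a finite triangulation.
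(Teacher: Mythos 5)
Your proof is correct and follows exactly the route the paper intends: the corollary is stated as an immediate consequence of Proposition \ref{plik_7_1}, with the same Euler-characteristic bookkeeping (if all $H^i(\Omega)$, $i\geq 1$, vanished then $\chi(\Omega)=\rank H^0(\Omega)\geq 1$, contradicting $\chi(\Omega)\leq 0$) that the paper already uses in the last sentence of Corollary \ref{plik_2_3}. Nothing is missing.
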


Investigating the gradient flow in polar coordinates and applying arguments presented by Moussu in \cite[p.449]{moussu}
the reader may also prove the next proposition. (As its proof would require to introduce other techniques ,
so we omit it here.)

\begin{prop}\label{plik_7_3}
Suppose that  there exists a non-degenerate critical point of $\omega|\Omega$
which is not a local minimum. Then $T(f)$ is infinite. 

In particular, if there exists a non-degenerate local maximum of $\omega|\Omega$
then the interior of the stable set of the origin is non-empty.
\end{prop}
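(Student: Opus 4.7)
The strategy, following \cite[p.449]{moussu}, is to blow up the origin in polar coordinates $x = r\xi$ with $r = |x|$ and $\xi \in S^{n-1}$, and to apply the stable manifold theorem to a desingularized flow. Splitting $\nabla f$ into its radial and spherical components, $\dot x = \nabla f(x)$ becomes $\dot r = \partial_r f(r\xi)$ and $r\dot\xi = \nabla' f(r\xi)$. Writing $f = \omega + g$ with $g = O(|x|^{d+1})$ and using Euler's identity for the $d$-homogeneous $\omega$, one obtains
\begin{align*}
\dot r &= d\,r^{d-1}\omega(\xi) + O(r^d),\\
\dot\xi &= r^{d-2}\bigl(\nabla'\omega(\xi) + O(r)\bigr),
\end{align*}
where $\nabla'\omega$ denotes the spherical gradient of $\omega|S^{n-1}$. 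Since $0$ is a critical point of $f$ we have $d\geq 2$, so the reparameterization $ds/dt = r^{d-2}$ is well defined along trajectories with $r>0$ and produces a vector field extending analytically across $\{r=0\}$:
\begin{align*}
\tfrac{dr}{ds} &= d\,r\,\omega(\xi) + O(r^2),\\
\tfrac{d\xi}{ds} &= \nabla'\omega(\xi) + O(r).
\end{align*}

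Now let $\xi_0\in\Omega$ be a non-degenerate critical point of $\omega|\Omega$ that is not a local minimum. Then $(\xi_0,0)$ is an equilibrium of the desingularized system, and in local coordinates on $S^{n-1}$ around $\xi_0$ combined with $r$, its Jacobian is block upper triangular with diagonal blocks equal to the Hessian $H$ of $\omega|S^{n-1}$ at $\xi_0$ and the scalar $d\,\omega(\xi_0)<0$ (the inequality because $\xi_0\in\Omega$). Non-degeneracy of $H$ makes the equilibrium hyperbolic, and the number of negative eigenvalues at $(\xi_0,0)$ equals $1+k$, where $k\geq 1$ counts the negative eigenvalues of $H$. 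By the stable manifold theorem---applied after extending the vector field smoothly to a two-sided neighborhood of $\{r=0\}$---there is a local stable manifold $W^s(\xi_0,0)$ of dimension $1+k\geq 2$. Because the $r$-axis lies in the stable eigenspace, $W^s(\xi_0,0)$ meets $\{r>0\}$ transversally along an open half of itself, still of dimension $\geq 2$. Each point of this half corresponds via $x=r\xi$ to a forward trajectory of $\dot x=\nabla f$ with $x(t)\to 0$; as trajectories are one-dimensional leaves of the induced foliation, we obtain a positive-dimensional family of distinct elements of $T(f)$, so $T(f)$ is infinite.

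If moreover $\xi_0$ is a non-degenerate local maximum, then $H$ is negative definite, so $k=n-1$ and $\dim W^s(\xi_0,0)=n$. The image of $W^s(\xi_0,0)\cap\{r>0\}$ under $(\xi,r)\mapsto r\xi$ is then an open subset of $\R^n\setminus\{0\}$, every point of which lies in the stable set of the origin, which therefore has non-empty interior.

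The main technical point is verifying that the reparameterized vector field extends analytically across $\{r=0\}$. This follows by writing $g(r\xi)=r^{d+1}\tilde g(r,\xi)$ with $\tilde g$ analytic on a neighborhood of $S^{n-1}\times\{0\}$, so that the terms written $O(r)$ and $O(r^2)$ above are genuine analytic functions vanishing to the stated orders at $r=0$. Once this is established, the standard stable manifold theorem applies without modification.
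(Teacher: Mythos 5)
Your proof is correct and follows exactly the route the paper itself indicates for Proposition \ref{plik_7_3}: the paper omits the argument and refers the reader to the polar-coordinate blow-up on p.~449 of Moussu \cite{moussu}, which is precisely the desingularization-plus-stable-manifold argument you carry out, including the correct block-triangular linearization with diagonal blocks $H$ and $d\,\omega(\xi_0)<0$. One small point of phrasing: the $r$-axis need not itself lie in the stable eigenspace (the off-diagonal block of the Jacobian can be nonzero); what matters, and what is true, is that the stable eigenspace meets the invariant hyperplane $\{r=0\}$ in a subspace of one dimension less, hence is transverse to it, so that $W^s(\xi_0,0)\cap\{r>0\}$ is a nonempty open subset of $W^s(\xi_0,0)$ of dimension $1+k\geq 2$.
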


\begin{ex} Let $f(x,y)=x^3+3xy^2+x^2 y^2$, so that $\omega=x^3+3xy^2$. 
It is easy to see that $\omega|S^1$ has a non-degenerate  local maximum at $(-1,0)\in\Omega$.
Then the interior of the stable set of the origin is non-empty.
In particular $T(f)$ is infinite.
\end{ex}

\section{Main results}\label{rozdzial_3}

\begin{theorem}\label{plik_8_1}
Suppose that $f:\R^n,0\rightarrow\R,0$ is an analytic function having
a critical point at the origin

If $\rank\,H^0(S_r)<\rank\, H^0(\Omega)$, i.e. the number of connected components of $S_r$ 
is smaller than the number of connected components of $\Omega$, then
the set of trajectories of the gradient flow $\dot{x}=\nabla f(x)$ converging to the origin
is infinite.
\end{theorem}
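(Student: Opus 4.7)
The plan is to argue by contradiction, combining Corollary \ref{plik_2_3} with Proposition \ref{plik_7_1} through the obvious set-theoretic inclusion $\tilde\Gamma^{d,a}(f)\subset\Gamma(f)$. Suppose $T(f)$ were finite. Then the natural bijection between $T(f)$ and $\Gamma(f)$ would make $\Gamma(f)$ a finite subset of $F_r(y)$, hence a finite discrete space; consequently every subspace of $\Gamma(f)$ is also finite and discrete, and for such spaces the number of connected components equals the number of points.

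First I would invoke Corollary \ref{plik_2_3} to identify $\rank H^0(S_r)$ with the number of connected components of $\Gamma(f)$; by the preceding observation this equals $|\Gamma(f)|$. Next, because $L'(f)\subset\Q^+\times\R_-$ is finite, I can pick $a<0$ with $|a|$ small enough that $(d,a)\notin L'(f)$, and then Proposition \ref{plik_7_1} gives $\bar H^*(\tilde\Gamma^{d,a}(f))\simeq H^*(\Omega)$, so $\rank H^0(\Omega)$ equals the number of connected components of $\tilde\Gamma^{d,a}(f)$, which in turn equals $|\tilde\Gamma^{d,a}(f)|$. The inclusion $\tilde\Gamma^{d,a}(f)\subset\Gamma(f)$ then yields
$$\rank H^0(\Omega)\ =\ |\tilde\Gamma^{d,a}(f)|\ \leq\ |\Gamma(f)|\ =\ \rank H^0(S_r),$$
contradicting the standing hypothesis. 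Hence $T(f)$ must be infinite.

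I do not expect a serious obstacle: once the two cohomological identifications supplied by Corollary \ref{plik_2_3} and Proposition \ref{plik_7_1} are in place, the proof collapses to the trivial remark that a subspace of a finite discrete space cannot have more connected components than the ambient space. The only point requiring any care is the selection of $a$, which must simultaneously satisfy $(d,a)\notin L'(f)$ and the smallness condition of Proposition \ref{plik_7_1}; both are compatible because $L'(f)$ is finite and the conclusion of Proposition \ref{plik_7_1} is valid for every $a$ with $0<-a\ll 1$.
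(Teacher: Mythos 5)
Your argument is correct and follows essentially the same route as the paper: assume $T(f)$ finite, use Corollary \ref{plik_2_3} to identify $\rank H^0(S_r)$ with $|\Gamma(f)|$, use Proposition \ref{plik_7_1} (which the paper unpacks into Lemma \ref{plik_5_1} plus Corollary \ref{plik_2_3}) to identify $\rank H^0(\Omega)$ with $|\tilde\Gamma^{d,a}(f)|$, and conclude from the inclusion $\tilde\Gamma^{d,a}(f)\subset\Gamma(f)$. The only cosmetic difference is that you cite the already-packaged Proposition \ref{plik_7_1} where the paper re-derives the isomorphism directly; the content is identical.
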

\begin{proof}
Suppoose, contrary to our claim, that $T(f)$ if finite. Then $\Gamma(f)$ is finite, and
 for any $(\ell,a)\in \Q^+\times\R_-\setminus L'(f)$ the set $\tilde\Gamma^{\ell,a}(f)$ is finite too. Hence 
$\rank \bar{H}^0(\tilde\Gamma^{\ell,a}(f))$
equals the number of elements in $\tilde\Gamma^{\ell,a}(f)$.

By Lemma \ref{plik_5_1}, there exist $0<r\ll -a\ll 1$ such that $\Omega$ and  $S_r^{d,a}$ have the same homotopy type.
By Corollary \ref{plik_2_3}, the group $H^*(S_r)$ is isomorphic to $\bar{H}^*(\Gamma(f))$.
Hence $\rank H^0(S_r)$$=\rank \bar{H}^0(\Gamma(f))$
equals the number of elements in $\Gamma(f)$. Moreover,
$\rank H^0(\Omega)$$=\rank H^0(S_r^{d,a})$$=\rank \bar{H}^0(\tilde\Gamma^{d,a}(f))$
equals the number of elements in $\tilde\Gamma^{d,a}(f)$.

As $\tilde\Gamma^{d,a}(f)\subset \Gamma(f)$, then $\rank H^0(\Omega)\leq\rank H^0(S_r)$,
which contradicts the assumption.
\end{proof}

\begin{theorem}\label{plik_8_2}
If $\chi(S_r)<\chi(\Omega)$ then $T(f)$ is infinite.
\end{theorem}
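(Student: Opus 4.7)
The plan is to argue by contradiction, reducing Theorem \ref{plik_8_2} to Theorem \ref{plik_8_1} via the observation that whenever $T(f)$ is finite, all of the relevant cohomology groups collapse to degree zero, so the Euler characteristic equals the $0$-th Betti number, and the strict inequality $\chi(S_r)<\chi(\Omega)$ transfers to $\rank H^0(S_r)<\rank H^0(\Omega)$.

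More concretely, suppose $T(f)$ is finite. First, by Corollary \ref{plik_2_3}, $\Gamma(f)$ is a finite set of points, so $\bar{H}^*(\Gamma(f))$ is concentrated in degree zero with rank equal to $|\Gamma(f)|$. The isomorphism $H^*(S_r)\simeq \bar{H}^*(\Gamma(f))$ therefore forces $H^i(S_r)=0$ for $i\geq 1$, and hence
\[
\chi(S_r)\;=\;\rank H^0(S_r)\;=\;|\Gamma(f)|.
\]
Next, choose $0<-a\ll 1$ with $(d,a)\notin L'(f)$, where $d$ is the degree of $\omega$. By Proposition \ref{plik_7_1} one has $\bar{H}^*(\tilde\Gamma^{d,a}(f))\simeq H^*(\Omega)$, and since $\tilde\Gamma^{d,a}(f)\subset\Gamma(f)$ is also finite, the same reasoning yields $H^i(\Omega)=0$ for $i\geq 1$ together with
\[
\chi(\Omega)\;=\;\rank H^0(\Omega)\;=\;|\tilde\Gamma^{d,a}(f)|.
\]

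The inclusion $\tilde\Gamma^{d,a}(f)\subset\Gamma(f)$ then gives $\chi(\Omega)\leq\chi(S_r)$, contradicting the hypothesis $\chi(S_r)<\chi(\Omega)$. Hence $T(f)$ must be infinite.

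No step here looks genuinely hard: the content is already contained in Corollary \ref{plik_2_3} and Proposition \ref{plik_7_1}. The only point that requires a moment of care is the observation that finiteness of $\Gamma(f)$ (resp.\ $\tilde\Gamma^{d,a}(f)$) as a topological space forces the higher \v{C}ech--Alexander cohomology to vanish, so that Euler characteristic and $0$-th Betti number coincide; this is immediate because a finite set of points is a $0$-dimensional discrete space. Once this is in place, the argument is a direct reduction to Theorem \ref{plik_8_1}.
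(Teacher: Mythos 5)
Your proof is correct and follows essentially the same route as the paper: both arguments reduce to the observation that finiteness of $T(f)$ forces all the relevant cohomology to be concentrated in degree zero (via Corollary \ref{plik_2_3} and Proposition \ref{plik_7_1}), so that the Euler characteristics become the cardinalities of $\Gamma(f)$ and $\tilde\Gamma^{d,a}(f)$, and the inclusion $\tilde\Gamma^{d,a}(f)\subset\Gamma(f)$ yields the contradiction. The only cosmetic difference is that the paper cites Theorem \ref{plik_8_1} as a black box at the final step, whereas you inline its counting argument.
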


\begin{proof} By Corollary \ref{plik_2_3} and Proposition \ref{plik_7_1}, it is enough to consider the case
where all cohomology groups $H^i(S_r)$, $H^i(\Omega)$, where $i\geq 1$, are trivial.

If that is the case then
$\rank H^0(S_r)=\chi(S_r)<\chi(\Omega)=\rank H^0(\Omega).$
By Theorem \ref{plik_8_1}, the set $T(f)$ is infinite.
\end{proof}

\begin{ex}\label{plik_8_3} Let $f(x,y)=x^3-y^2$, so that $\omega=-y^2$.
Then $\Omega=\{ (x,y)\in S^1\ |\ -y^2<0    \}= S^1\setminus \{(\pm 1, 0)\}$.
Obviously $\Omega$ has two connected components and $H^i(\Omega)=0$ for any $i\geq 1$.
The function $\omega |\Omega$ has exactly two critical (minimum) points at $(0,\pm 1)$.
As $S_r$ is homeomorphic to an interval, then by Theorem \ref{plik_8_1} the set $T(f)$ is infinite.
\end{ex}

\begin{ex}\label{plik_8_4} Let $f(x,y,z)=xyz-z^4$, so that $\omega=xyz$.
It is easy to see that $\Omega$ is homeomorphic to a disjoint union of four discs,
and $S_r$ is homeomorphic to a disjoint union of two discs. By Theorem \ref{plik_8_1} the set
$T(f)$ is infinite.
\end{ex}

\begin{ex}\label{plik_8_5} Let $f(x,y,z)=xyz+x^4y-2y^4z+3xz^4$,
so that $f$ has an isolated critical point at the origin and $\omega=xyz$.
Applying Andrzej {\L}\c ecki computer program (see \cite{leckiszafraniec1}) we have verified that
the local topological degree of the mapping
$$\R^3,0\,\ni (x,y,z)\ \mapsto\ -\nabla f(x,y,z)\, \in\R^3,0$$
equals zero. By \cite{khimshiashvili1}, \cite{khimshiashvili2}, the Euler-Poincar\'e characteristic $\chi(S_r^2\cap\{f\geq 0\})=1-0=1$.
By the Alexander duality theorem $\chi(S_r)=1$. By Theorem \ref{plik_8_2} the set $T(f)$ is infinite.
\end{ex}

Zbigniew SZAFRANIEC\\
Institute of Mathematics, University of Gda\'nsk\\
80-952 Gda\'nsk, Wita Stwosza 57, Poland\\
Zbigniew.Szafraniec@mat.ug.edu.pl\\

\end{document}